\begin{document}
\newcommand{\beq}{\begin{equation}}
\newcommand{\eb}{\begin{equation}}
\newcommand{\eneq}{\end{equation}}
\newcommand{\ee}{\end{equation}}
\newtheorem{thm}{Theorem}[section]
\newtheorem{coro}[thm]{Corollary}
\newtheorem{lem}[thm]{Lemma}
\newtheorem{prop}[thm]{Proposition}
\newtheorem{defi}[thm]{Definition}
\newtheorem{rem}[thm]{Remark}
\newtheorem{cl}[thm]{Claim}
\newcommand{\real}{\mathbb{R}}
\newcommand{\calE}{\mathcal{E}}
\title[Multiple nonsmooth solutions for nonconvex variational boundary value problems]{Multiple nonsmooth
solutions for nonconvex variational boundary value problems in
$\mathbb{R}^n$}
\author{Xiaojun Lu$^{1}$ and David Yang Gao$^2$
}
\thanks{Corresponding author: D. Gao}
\thanks{Keywords: Nonconvex variational problems,
nonlinear PDEs, canonical duality-triality theory, analytical
solution}
\thanks{Mathematics Subject Classification: 35J20, 35J60, 74G65,
74S30}
\date{}
\maketitle

\pagestyle{fancy}                   
\lhead{X. Lu and D. Gao} \rhead{Nonconvex variational boundary value problems} 
\begin{center}
1. Department of Mathematics \& Jiangsu Key Laboratory of
Engineering Mechanics, Southeast University, 210096, Nanjing,
China\\
2. Faculty of Science and Technology, Federation University
Australia, Ballarat, VIC 3350, Australia
\end{center}
\vspace{.5cm}
\begin{abstract}
This paper presents a set of complete solutions of a nonconvex
variational problem with a double-well potential. Based on the
canonical duality-triality theory, the associated nonlinear
differential equation with either Dirichlet/Neumann or mixed
boundary conditions can be converted into an algebraic equation,
which can be solved analytically to obtain all solutions in the dual
space. Both global and local extremality conditions are identified
by the triality theory. In the application part, typical mechanical
models with specific sources and boundary conditions in
$\mathbb{R}^2$ are exhibited.
\end{abstract}

\section{Problem formulation and motivations}
Our goal of this paper is to solve the following nonconvex
variational problem
\begin{equation}
(\mathcal{P}_n):
\displaystyle\min_{u\in\mathcal{U}}\Big\{P_n(u):=\int_\Omega
W(\nabla u)dx-\int_\Omega fudx-\int_{\Gamma_t}tu d\Gamma\Big\},
\label{eq-pp}
\end{equation}
where $\Omega\subset\mathbb{R}^n$ is an open, bounded and simply
connected domain with sufficiently smooth boundary $\partial\Omega
=\Gamma=\Gamma_u\cup\Gamma_t$; On $\Gamma_t$, the Neumann force (or
surface traction) $t$ is given, while on $\Gamma_u$, the Dirichlet
boundary condition is prescribed; the source term $f$ can be viewed
as the distributed defects in phase transitions. The nonconvex
function $W$ is a fourth-order polynomial defined as \eb
\displaystyle W(\overrightarrow{y}):={\nu/2}\left({1/
2}|\overrightarrow{y}|^2-\lambda\right )^2,\ee where
$\overrightarrow{y}\in\mathbb{R}^n$, $\nu, \lambda>0$ are given
constants. This function is the so-called double-well potential in
phase transitions, or the Mexican-hat in quantum mechanics. The
double-well potential was first studied by Van der Waals in 1893 for
a compressible fluid whose free energy at constant temperature
depends not only on the density, but also on the density gradient
(see \cite{waals}). Since then, this nonconvex function has been
found extensive applications in nonlinear sciences. For examples, in
phase transitions of Ericksen's bar \cite{E}, or the mathematical
theory of super-conductivity \cite{G-7,gao-yu}, $W$ is the
well-known Landau's second order free energy; each of its local
minimizers represents a possible phase state of the material, while
each local maximizer characterizes the critical conditions that lead
to the phase transitions. In quantum mechanics, if
$\overrightarrow{y}$ represents the Higgs' field strength, then $W$
is the Higgs' potential (see \cite{higgs}). In economics, many
existence results for Nash Equilibrium (NE) points of
non-cooperative games are concerned with this kind of nonconvex
problems. It was discovered in the context of post-buckling analysis
\cite{gao-mrc96,G-2} that the stored potential energy of a large
deformed beam model is also a double-well function, where each
potential-well represents a possible buckled beam state, and the
local maximizer is corresponding to the unbuckled state.
Additionally, the nonconvex function $W$ also plays fundamental
roles in cosmology \cite{kibble}, mathematical economics
\cite{N-B,I-R,E}, chaotic dynamics \cite{ruan-gao-ima13}, finite
deformation mechanics \cite{gao-ima98,G-6,G-7}, and much more (see
review articles \cite{G-2,gao-amma03}).

Our goal is to find  all possible analytical solutions of the
nonconvex variational problem (\ref{eq-pp}). We let $p\in[1,\infty]$
and define a function space $\mathcal{U}$ as \[ \mathcal{U} :=
\Big\{u\in L^p(\Omega)\ |\; \; u(x) = 0 \;\; \text{for}\ \forall x
\in \Gamma_u, \;\; \nabla u\in (L^4(\Omega))^n\Big\}. \] For our
purpose, we restrict $p,q,\beta\in[2,\infty]$ and choose the subset
$\mathcal{U}_0$ of the function space $\mathcal{U}$ as the feasible
function space (let ${1}/{p}+{1}/{p'}=1$ and ${1}/{q}+{1}/{q'}=1$),
\eb\mathcal{U}_0:= \Big\{u\in \mathcal{U}\ |\ \Delta u\in
L^{{2p'}/{(2-p')}}(\Omega), \;\; (u,\frac{\partial u}{\partial{
\overrightarrow{n}}})|_{\Gamma_t}\in L^q(\Gamma_t)\times
L^\beta(\Gamma_t)\Big\}. \ee Here, $\Delta u\in L^\infty(\Omega)$
when $p=2$. It is evident that the above subset is not empty since
$C^m(\overline{\Omega})\subset\mathcal{U}_0$, $m=2,3,\cdots$.
\begin{rem}
Indeed, the above $\mathcal{U}$ is a special Sobolev space equipped
with the norm \[ \|u\|_{\mathcal{U}}:=
\|u\|_{L^p}+(\displaystyle\sum_{i=1}^n\|u_{x_i}\|^2_{L^4})^{{1}/{2}}.
\]
Clearly, $\mathcal{U}$ is a reflexive Banach space when
$p\in(1,\infty)$. If $p=4$, then $\mathcal{U}$ is equivalent to
$W^{1,4}(\Omega)$(see \cite{RA}). Moreover, when $p\in[1,4]$,
$L^4(\Omega)$ is continuously embedded in $L^p(\Omega)$ since
$\Omega$ is bounded. The smoothness of $\Gamma_u\cup\Gamma_t$
assures the regularity on the boundary. For instance, when $p=4$,
$\Gamma_u\cup\Gamma_t\in C^1$, then the trace $\gamma u\in
W^{{3}/{4},4}(\Gamma_u\cup\Gamma_t)$ according to the trace theory
in \cite{LIONS}.
\end{rem}
Therefore, the criticality condition $D P_n(u)=0$ leads to the
following nonlinear partial differential equation in $\Omega$ and
natural boundary condition on $\Gamma_t$:
\begin{equation}\left\{\begin{array}{lll}
\nabla\cdot\overrightarrow{\sigma}(\nabla u)=f &\mbox{in} &\Omega,\\
\\
\overrightarrow{n}\cdot\overrightarrow{\sigma}(\nabla u)=t
&\mbox{on} &\Gamma_t,
\end{array}\right.
\end{equation}
where \[\overrightarrow{\sigma}=(\sigma_1,\cdots,\sigma_n)
=\nu({1}/{2}|\overrightarrow{y}|^2-\lambda)\overrightarrow{y},
\]
and $\overrightarrow{n}$ stands for the unit outward normal vector.
Actually,
\[
\overrightarrow{\sigma}\in\Big\{\overrightarrow{g}\in(L^{4/3}(\Omega))^n\
|\ \nabla\cdot\overrightarrow{g}\in L^{p'}(\Omega),\;\;
\overrightarrow{g}\cdot\overrightarrow{n}\in L^{q'}(\Gamma_t)\Big\}.
\]
Due to the nonconvexity of $W(\overrightarrow{y})$, for given
parameter $\lambda>0$, the source $f \in L^{p'}(\Omega)$ and
boundary term $t\in L^{q'}(\Gamma_t)$, the nonlinear differential
equation (4) may have multiple solutions at each material point
$x\in\Omega$. Since the domain $\Omega$ is continuous, therefore,
the boundary value problem (4) could have infinite number of
solutions. Each of these solutions is a critical point of $P_n(u)$,
i.e. it could be either an extremum or a saddle point of the total
potential. This phenomenon has been verified by Ericksen who proved
that many local solutions are metastable and may have arbitrary
number of phase interfaces. Compared with convex problems, a
fundamentally different issue in nonconvex analysis is that the
solutions of the boundary-value problem is not equivalent to the
associated minimum variational problem. By the fact that the
second-order condition $\delta^2 P_n({\bar u}) \ge 0$ is only a
necessary condition for identifying the global minimal solutions, it
is well-known that traditional direct approaches for solving the
nonconvex variational problem $(\mathcal{P}_n)$ are fundamentally
difficult. Actually, even in the case of finite dimensional space,
many nonconvex global optimization problems are considered to be
NP-hard.

The purpose of this paper is to solve the challenging nonconvex
minimization problem $(\mathcal{P}_n)$ by using the {\em canonical
duality theory.} This is a methodological theory which can be used
for solving a large class of nonconvex/nonsmooth/discrete problems
in multidisciplinary fields, including mathematical physics, global
optimization, computational science, industrial and systems
engineering, etc. \cite{gao-amma03,gao-cace09,gao-sherali}. The
canonical duality theory has been used successfully by D. Y. Gao and
R. W. Ogden for 1D problems in finite deformation mechanics
\cite{G-5} and phase transitions of the Ericksen bar \cite{G-6}.
Their work showed that by using the canonical duality theory, the
nonlinear ordinary differential equations can be converted into
algebraic equations which can be solved completely to obtain all
possible solutions. Both global and local extrema can be identified
by the triality theory. They discovered that for certain given
external loads, the global minimizer is nonsmooth and cannot be
determined by any Newton-type numerical methods.

The rest of the paper is organized as follows. In Section 2, we
apply the nonlinear canonical dual transformation to establish the
perfect dual problem and a pure complementary energy principle for
$(\mathcal{P}_n)$. The triality theory provides both global and
local extremality conditions for the nonconvex problem. A set of
complete solutions for $(\mathcal{P}_n)$ is given and the existence
of analytical solutions of the corresponding boundary value problems
is also discussed. Finally, applications in 2D are illustrated in
Section 3.

\section{Canonical duality techniques and main results}
By the fact that the linear operator $\nabla$ cannot change the
nonconvexity of the double-well function $W(\nabla u)$, instead, we
use the following geometrically nonlinear measure \cite{G-6} \[
\xi:=\Lambda(u)={1/ 2}|\nabla u|^2: \; \mathcal{U}\to\calE \subset
L^2(\Omega),
\]
where $$\calE:=\{\xi\in L^2(\Omega) \; | \;\; \xi \geq 0 \}.$$ Thus,
in terms of this nonlinear measure, the nonconvex function $W(\nabla
u)$ can be written in the so-called canonical form $W(\nabla
u)=U(\Lambda(u))$, where \[ U(\xi):= {\nu/2}( \xi - \lambda)^2,\]
which is a convex function with respect to $\xi$. Therefore, the
canonical dual stress \[\zeta =DU(\xi)=\nu(\xi-\lambda)
\]
is well defined and belongs to \[\mathcal{E}^*:=\Big\{\zeta\in
L^2(\Omega)\ |\ \zeta \geq-\nu\lambda, \;\; \nabla\zeta\in
(L^{{4p'}/{(4-p')}}(\Omega))^n,\;\; \zeta |_{\Gamma_t}\in
L^{{q'\beta}/{(\beta-q')}}(\Gamma_t)\Big\}.
\]
By the Legendre transformation, the complementary energy function
$U^\ast(\zeta)$ can be obtained by
$$
U^\ast(\zeta)=\xi\zeta-U(\xi)={\zeta^2/(2\nu)}+\lambda\zeta.
$$
Replacing $W(\nabla u)=U(\Lambda(u))$ in $(\mathcal{P}_n)$ by
$\Lambda (u)\zeta-U^\ast(\zeta)$, we obtain the Gao-Strang total
complementary energy $\Xi(u,\zeta)$ in the form \begin{equation}
\Xi(u,\zeta):=\displaystyle\int_{\Omega}\{\Lambda(u)\zeta-U^\ast(\zeta)
-fu\}dx-\int_{\Gamma_t}tud\Gamma.
\end{equation}
Next, we introduce the following criticality condition.
\begin{defi}
$(\bar{u}, \bar{\zeta})\in\mathcal{U}_0\times\mathcal{E}$ is said to
be a critical point of $\Xi(u,\zeta)$ if and only if
\begin{equation}
D_{u}\Xi(\bar{u},\bar{\zeta})=0\end{equation} and
\begin{equation}
D_{\zeta}\Xi(\bar{u},\bar{\zeta})=0,
\end{equation}
where $D_{u}, D_{\zeta}$ denote the partial G\^ateaux derivatives,
respectively. \end{defi} For a fixed $\zeta\in\mathcal{E}$, $(6)$
leads to the equilibrium equation
\begin{equation}
\left\{
\begin{array}{cll}
\nabla\cdot(\zeta\nabla\bar{u})+f&=&0\ \mbox{in}\ \Omega,\\
\\
(\zeta\nabla\bar{u})\cdot\overrightarrow{n}&=&t\ \mbox{on}\
\Gamma_t.
\end{array}
\right.
\end{equation}
In particular, according to H\"{o}lder's inequality, $\|\zeta\nabla
u\|_{L^{4/3}(\Omega)}\leq\|\zeta\|_{L^2(\Omega)}\|\nabla
u\|_{L^4(\Omega)}$. While for a fixed $u\in\mathcal{U}_0$, (7) is
consistent with the constitutive law
\begin{equation}
\Lambda(u)=DU^\ast(\bar{\zeta}).
\end{equation}
Next we consider the pure complementary energy functional
\begin{equation}
P^d_n(\zeta):=\Xi(\bar{u},\zeta),
\end{equation}
where $\bar{u}$ is a solution of BVP (8). By applying Green's
formula, $\Xi(u,\zeta)$ can be rewritten as
\begin{equation}\begin{array}{lll}
& & \Xi(u,\zeta) = \displaystyle\int_\Omega\Big\{\Big({1}/{2}|\nabla
u|^2-\lambda\Big)\zeta-U^\ast(\zeta)-fu\Big\}dx-\int_{\Gamma_t} tud\Gamma\\
\\
&=&\displaystyle\int_\Omega\Big\{|\nabla
u|^2\zeta-\lambda\zeta-U^\ast(\zeta)-fu\Big\}dx-\int_\Omega{1}/{2}|\nabla
u|^2\zeta dx-\int_{\Gamma_t} tud\Gamma\\
\\
&=&\displaystyle\underbrace{\int_{\Gamma_t}\Big\{\overrightarrow{\sigma}\cdot\overrightarrow{n}-t\Big\}\
ud\Gamma}_{(I)}-\underbrace{\int_\Omega\Big\{\nabla\cdot
\overrightarrow{\sigma}+f\Big\}udx}_{(II)}-\underbrace{\int_\Omega\Big\{{1}/{2}|\nabla
u|^2\zeta+\lambda\zeta+U^\ast(\zeta)\Big\}dx ,}_{(III)}\\
\\
\end{array}\end{equation}
where $\overrightarrow{\sigma}=\zeta\nabla u $. Therefore, if
$\bar{u}$ solves BVP (8), then the pure complementary energy
functional is in fact
\begin{eqnarray}\label{p-nonconvex-10}
P^d_n(\zeta)=-{1/ 2}\int_{\Omega}\Big({|\overrightarrow{\sigma}|^2/
\zeta}+2\lambda\zeta +{\zeta^2/\nu}\Big)dx,
\end{eqnarray}
where $\overrightarrow{\sigma}$ is a solution of the BVP (4). From
the constitutive principle, it is clear that
$|\overrightarrow{\sigma}|^2=o(\zeta)$. The variation of $P_n^d$
with respect to $\zeta$ leads to the dual algebraic equation (DAE),
namely,
\begin{equation}
|\overrightarrow{\sigma}|^2=2\zeta^2(\lambda+{\zeta/\nu}).
\end{equation}
For given parameters $\nu$, $\lambda$ and $\overrightarrow{\sigma}$,
the three complex solutions of the cubic DAE (13) are listed below,
\begin{equation}\zeta_1={1}/{3}\left(-\nu \lambda
+{\sqrt[3]{4}\nu^2\lambda^2}{\omega^{-1}(\nu,\lambda,\overrightarrow{\sigma})}+{\omega(\nu,\lambda,\overrightarrow{\sigma})}4^{-1/3}\right),
\end{equation}
\begin{equation}\zeta_2=-{\nu\lambda}/{3}-3^{-1}\cdot 2^{-1/3}{\left(1-i
\sqrt{3}\right)\nu^2\lambda^2}{
\omega^{-1}(\nu,\lambda,\overrightarrow{\sigma})}-6^{-1}\cdot4^{-1/3}{\left(1+i\sqrt{3}\right)
\omega(\nu,\lambda,\overrightarrow{\sigma})},
\end{equation}
\begin{equation}\zeta_3=-{\nu\lambda}/{3}-3^{-1}\cdot 2^{-1/3}{\left(1+i\sqrt{3}\right)\nu^2
\lambda^2}{\omega^{-1}(\nu,\lambda,\overrightarrow{\sigma})}-6^{-1}\cdot4^{-1/3}{\left(1-i
\sqrt{3}\right)\omega(\nu,\lambda,\overrightarrow{\sigma})},\end{equation}
where
$$\omega(\nu,\lambda,\overrightarrow{\sigma}):=\left(-4\nu^3\lambda^3+27\nu|\overrightarrow{\sigma}|^2+3
\sqrt{3}\sqrt{-8\nu^4\lambda^3|\overrightarrow{\sigma}|^2+27\nu^2|\overrightarrow{\sigma}|^4}\right)^{1/3}.$$
\begin{lem}
From (13)-(16), we know that $|\overrightarrow{\sigma}|^2$ has a
maximum $8\lambda^3\nu^2/27$ at $\zeta=-2\lambda\nu/3$ and minimum 0
at 0. If
$|\overrightarrow{\sigma}|^2\in(8\lambda^3\nu^2/27,\infty)$, then
there exists only one real root $\zeta>0$ of the polynomial (13). If
$|\overrightarrow{\sigma}|^2\in(0,8\lambda^3\nu^2/27)$, then there
exist three real roots $\zeta_1>\zeta_2>\zeta_3$. While when
$|\overrightarrow{\sigma}|^2=8\lambda^3\nu^2/27$, there exist two
real roots.
\end{lem}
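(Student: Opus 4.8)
The plan is to treat the dual algebraic equation (13) as the level-set equation $g(\zeta)=|\overrightarrow{\sigma}|^2$ for the scalar cubic
$g(\zeta):=2\zeta^2(\lambda+\zeta/\nu)=(2/\nu)\zeta^3+2\lambda\zeta^2$, and to count its real solutions by an elementary one-variable shape analysis of $g$ on $\real$, using only the sign of $g'$ and the intermediate value theorem on monotone branches.

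First I would locate the critical points of $g$. Differentiating, $g'(\zeta)=(6/\nu)\zeta^2+4\lambda\zeta=(2\zeta/\nu)(3\zeta+2\nu\lambda)$, which vanishes precisely at $\zeta=0$ and $\zeta=-2\nu\lambda/3$. Evaluating gives $g(0)=0$ and $g(-2\nu\lambda/3)=8\lambda^3\nu^2/27$; since $\nu,\lambda>0$ these two values are distinct and the second is positive. A sign check of $g'$ — equivalently the second-derivative test, $g''(0)=4\lambda>0$ and $g''(-2\nu\lambda/3)=4\lambda-8\lambda=-4\lambda<0$ — shows that $\zeta=0$ is a strict local minimum and $\zeta=-2\nu\lambda/3$ a strict local maximum, which already establishes the first sentence of the lemma. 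Because the leading coefficient $2/\nu$ is positive, $g$ is strictly increasing on $(-\infty,-2\nu\lambda/3]$ from $-\infty$ up to $8\lambda^3\nu^2/27$, strictly decreasing on $[-2\nu\lambda/3,0]$ down to $0$, and strictly increasing on $[0,\infty)$ up to $+\infty$.

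Next I would feed the constraint $|\overrightarrow{\sigma}|^2\ge 0$ into this picture. If $|\overrightarrow{\sigma}|^2>8\lambda^3\nu^2/27$, the horizontal line lies strictly above the local maximum, so by strict monotonicity it meets the graph exactly once, necessarily on $(0,\infty)$ where $g$ sweeps $[0,\infty)$; this gives a unique real root with $\zeta>0$. If $0<|\overrightarrow{\sigma}|^2<8\lambda^3\nu^2/27$, the line meets each of the three monotone branches exactly once, producing three distinct real roots in $(-\infty,-2\nu\lambda/3)$, $(-2\nu\lambda/3,0)$ and $(0,\infty)$ respectively; ordering them decreasingly yields $\zeta_1>\zeta_2>\zeta_3$ as claimed. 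At the threshold $|\overrightarrow{\sigma}|^2=8\lambda^3\nu^2/27$ the line is tangent at the local maximum, so the first two roots merge into the double root $\zeta=-2\nu\lambda/3$ while one simple root survives in $(0,\infty)$, giving exactly two distinct real roots.

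There is no substantive obstacle here — it is a textbook cubic analysis — so the only thing needing care is the bookkeeping: checking the arithmetic $g(-2\nu\lambda/3)=8\lambda^3\nu^2/27$, and noting that distinctness and ordering of the three roots in the middle regime are automatic from strict monotonicity on disjoint intervals. As an alternative consistent with the phrasing ``from (13)--(16)'', one may argue directly from the closed forms: the expression under the inner square root in $\omega(\nu,\lambda,\overrightarrow{\sigma})$ equals $\nu^2|\overrightarrow{\sigma}|^2\bigl(27|\overrightarrow{\sigma}|^2-8\nu^2\lambda^3\bigr)$, whose sign changes exactly at $|\overrightarrow{\sigma}|^2=8\lambda^3\nu^2/27$; this is (up to a positive factor) the classical discriminant of the cubic, and its sign dictates one real root versus three, with equality forcing the repeated root. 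I would present the graphical argument as the main proof, since it simultaneously yields the count, the ordering, and the signs of the roots.
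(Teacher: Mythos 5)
Your proof is correct, but it takes a genuinely different route from the paper's. The paper argues directly from the closed-form expressions (14)--(16): in the regime $|\overrightarrow{\sigma}|^2\in(0,8\lambda^3\nu^2/27)$ the quantity under the square root in $\omega$ is negative, so $\omega\bar{\omega}=\sqrt[3]{16}\,\nu^2\lambda^2$ and one may write $\omega=\sqrt[3]{4}\,\nu\lambda(\cos\theta+i\sin\theta)$ with $\theta\in(0,\pi/3)$; substituting this polar form into (14)--(16) gives the trigonometric expressions $\zeta_1=\tfrac{1}{3}\nu\lambda(2\cos\theta-1)$, $\zeta_2=-\tfrac{1}{3}\nu\lambda(1+\cos\theta-\sqrt{3}\sin\theta)$, $\zeta_3=-\tfrac{1}{3}\nu\lambda(1+\cos\theta+\sqrt{3}\sin\theta)$, from which the ordering and the localization $\zeta_1>0>\zeta_2>-2\nu\lambda/3>\zeta_3>-\nu\lambda$ are read off. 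Your argument instead ignores the Cardano formulas entirely and performs the standard shape analysis of the cubic $g(\zeta)=2\zeta^2(\lambda+\zeta/\nu)$, counting intersections with a horizontal line via monotone branches and the intermediate value theorem. Both are complete; yours is more elementary and self-contained (no case analysis of complex cube roots), and your discriminant remark correctly identifies $\nu^2|\overrightarrow{\sigma}|^2(27|\overrightarrow{\sigma}|^2-8\nu^2\lambda^3)$ as the quantity governing the transition. What the paper's computation buys, and what you would need to add one line for, is the sharper localization $\zeta_3\in(-\nu\lambda,-2\nu\lambda/3)$ used later in (21); in your framework this follows immediately from the additional observation that $g(-\nu\lambda)=0$, so the root on the leftmost increasing branch lies in $(-\nu\lambda,-2\nu\lambda/3)$ whenever $0<|\overrightarrow{\sigma}|^2<8\lambda^3\nu^2/27$.
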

\begin{proof} It suffices to prove the fact
$\zeta_1>\zeta_2>\zeta_3$ when
$|\overrightarrow{\sigma}|^2\in(0,8\lambda^3\nu^2/27)$. Actually,
$\omega\bar{\omega}=\sqrt[3]{16}\nu^2\lambda^2.$ From complex
analysis, it is reasonable to set
$\omega=\sqrt[3]{4}\nu\lambda(\cos\theta+i\sin\theta)$,
$\theta\in(0,\pi/3)$. Through simple calculation, one knows
immediately that $$\zeta_1=1/3\nu\lambda(2\cos\theta-1)>0;$$
$$-2\nu\lambda/3<\zeta_2=-\nu\lambda/3(1+\cos\theta-\sqrt{3}\sin\theta)<0;$$
$$-\nu\lambda<\zeta_3=-\nu\lambda/3(1+\cos\theta+\sqrt{3}\sin\theta)<-2\nu\lambda/3.$$ Our
proof is concluded.
\end{proof}
By comparing (4) with (8), we deduce that, for $i,j=1, \dots ,n$, in
order to give an integral form of the solution $u$, the following
compatibility condition has to be satisfied
\begin{equation}
\Phi_\zeta(\sigma_i,\sigma_j):=\left|\begin{array}{cc}
\partial_{x_i}&
\partial_{x_j}\\
\\
\sigma_i\zeta^{-1}&\sigma_j\zeta^{-1}
\end{array}\right|=0.
\end{equation}
Let us define the subregion $\mathcal{S}$ as
\begin{equation}
\displaystyle\mathcal{S}:=\Big\{x\in\overline{\Omega}\ | \
\Phi_\zeta(\sigma_i,\sigma_j)=0, \ i,j=1, \dots,n \Big\}.
\end{equation}
Evidently, the compatibility condition (17) guarantees the path
independence of the integral for $\overrightarrow{\sigma}\zeta^{-1}$
in $\mathcal{S}$. By replacing $\overrightarrow{\sigma}$ in (4) by
$\nu({1}/{2}|\nabla u|^2-\lambda)\nabla u$, then $(\mathcal{P}_n)$
is equivalent to the following BVP,
\begin{equation} \left\{
\begin{array}{cll}
\displaystyle
 \nabla\cdot(\nu({1}/{2}|\nabla u|^2-\lambda)\nabla u)+f&=&0\ {\mbox{in}}\ \Omega,\\
 \\
 (\nu({1}/{2}|\nabla u|^2-\lambda)\nabla u)\cdot\overrightarrow{n}&=&t\ {\mbox{on}}\
 \Gamma_t.
\end{array}
\right.
\end{equation}
In $\mathcal{S}$, the analytical solutions of BVP (19) can be given
by the path integral
\begin{equation}
u(x)=\int^{x}_{x_0}\overrightarrow{\sigma}\zeta^{-1}d{s}+u(x_{0}),
\end{equation}
where $x,\ x_{0} \in\mathcal{S}$. Summarizing the above discussion,
we obtain the theorem below.
\begin{thm}
For a given source $f(x)$ and boundary condition $t$ such that
$\overrightarrow{\sigma}(x)$ is determined by BVP (4), then DAE (13)
has at most three real roots $\bar{\zeta}_i(x)$, $i=1,2,3$, given by
(14)-(16) and ordered as
\begin{equation}
\bar{\zeta}_1(x)\geq0\geq\bar{\zeta}_2(x)\geq-2\nu\lambda/3\geq\bar{\zeta}_3(x)\geq-\nu\lambda.
\end{equation}
For $i=1,2,3$, the functions defined in $\mathcal{S}$ by
\begin{equation} \bar{u}_i(x)=\int^{x}_{x_0}\overrightarrow{\sigma}(s)\bar{\zeta}^{-1}_i(s)d{s}+u(x_{0})
\end{equation}
are solutions of BVP (19). Furthermore,
\begin{equation}
P_n(\bar{u}_i)=P^d_n(\bar{\zeta}_i), i=1,2,3.
\end{equation}
\end{thm}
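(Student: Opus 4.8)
The statement bundles three claims, and I would prove them in sequence: (a) the count of real roots of (13) and the chain of inequalities (21); (b) that each $\bar{u}_i$ given by (22) solves BVP (19); (c) the energy equality (23). Claim (a) is essentially a corollary of the Lemma just proved: for each fixed $x\in\mathcal{S}$, equation (13) is the cubic studied there with datum $|\overrightarrow{\sigma}(x)|^2$, so the classification of the real roots according to the size of $|\overrightarrow{\sigma}|^2$, and the fact that (14)--(16) are the Cardano expressions for them, transfer directly. To get the ordering I would reuse the trigonometric parametrisation $\omega=\sqrt[3]{4}\,\nu\lambda(\cos\theta+i\sin\theta)$, $\theta\in[0,\pi/3]$, from the Lemma's proof, which gives $\bar{\zeta}_1=\tfrac13\nu\lambda(2\cos\theta-1)$, $\bar{\zeta}_2=-\tfrac13\nu\lambda(1+\cos\theta-\sqrt{3}\sin\theta)$, $\bar{\zeta}_3=-\tfrac13\nu\lambda(1+\cos\theta+\sqrt{3}\sin\theta)$; reading off the ranges of these three functions over $\theta\in[0,\pi/3]$ yields $\bar{\zeta}_1\ge 0\ge\bar{\zeta}_2\ge-\tfrac23\nu\lambda\ge\bar{\zeta}_3\ge-\nu\lambda$, with equalities occurring exactly in the two degenerate cases $|\overrightarrow{\sigma}|^2\in\{0,\,8\lambda^3\nu^2/27\}$.

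For claim (b), fix $i$ and the real root $\bar{\zeta}_i(x)$. On $\mathcal{S}$ the compatibility condition (17) holds by the definition (18), so the field $\overrightarrow{\sigma}\,\bar{\zeta}_i^{-1}$ is curl-free and the path integral (22) is path-independent; hence $\bar{u}_i$ is well defined on $\mathcal{S}$ with $\nabla\bar{u}_i=\overrightarrow{\sigma}\,\bar{\zeta}_i^{-1}$, i.e.\ $\overrightarrow{\sigma}=\bar{\zeta}_i\nabla\bar{u}_i$. Taking the squared norm gives $|\nabla\bar{u}_i|^2=|\overrightarrow{\sigma}|^2\bar{\zeta}_i^{-2}$, and substituting the DAE $|\overrightarrow{\sigma}|^2=2\bar{\zeta}_i^2(\lambda+\bar{\zeta}_i/\nu)$ reduces this to $\tfrac12|\nabla\bar{u}_i|^2-\lambda=\bar{\zeta}_i/\nu$, so that $\nu\bigl(\tfrac12|\nabla\bar{u}_i|^2-\lambda\bigr)\nabla\bar{u}_i=\bar{\zeta}_i\nabla\bar{u}_i=\overrightarrow{\sigma}$. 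Because $\overrightarrow{\sigma}$ is, by hypothesis, the stress field produced by BVP (4) (the equilibrium equation in $\Omega$ and the Neumann condition on $\Gamma_t$), substituting the expression $\nu(\tfrac12|\nabla\bar{u}_i|^2-\lambda)\nabla\bar{u}_i$ for $\overrightarrow{\sigma}$ shows that $\bar{u}_i$ satisfies (19); the Dirichlet condition on $\Gamma_u$ I would arrange by taking the base point $x_0\in\Gamma_u$ with $u(x_0)=0$. I expect this to be the delicate step: one has to check that $\overrightarrow{\sigma}\,\bar{\zeta}_i^{-1}$ is genuinely integrable along curves in $\mathcal{S}$ — the only worry is at zeros of $\overrightarrow{\sigma}$, where $\bar{\zeta}_1,\bar{\zeta}_2$ also vanish, but there $|\overrightarrow{\sigma}|^2=2\bar{\zeta}_i^2(\lambda+\bar{\zeta}_i/\nu)$ forces $|\overrightarrow{\sigma}/\bar{\zeta}_i|\to\sqrt{2\lambda}$, so the integrand stays bounded (this is the meaning of $|\overrightarrow{\sigma}|^2=o(\zeta)$ noted before (13)) — and that the resulting $\bar{u}_i$ lies in the admissible class with the prescribed trace.

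For claim (c) I would combine two equalities. First, by the construction in (b), $\bar{\zeta}_i=\nu(\Lambda(\bar{u}_i)-\lambda)=DU(\Lambda(\bar{u}_i))$, so the Fenchel--Young equality for the convex potential $U$ gives, pointwise, $\Lambda(\bar{u}_i)\bar{\zeta}_i-U^\ast(\bar{\zeta}_i)=U(\Lambda(\bar{u}_i))=W(\nabla\bar{u}_i)$; since the linear terms $-\int_\Omega fu\,dx-\int_{\Gamma_t}tu\,d\Gamma$ are common to $\Xi$ and to $P_n$, integrating over $\Omega$ gives $\Xi(\bar{u}_i,\bar{\zeta}_i)=P_n(\bar{u}_i)$. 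Second, $\bar{u}_i$ solves the equilibrium system (8) with $\zeta=\bar{\zeta}_i$ — that is exactly the rewriting of (19) used above — so in the Green's-formula decomposition (11) the boundary residual $(I)$ and the interior residual $(II)$ both vanish, leaving $\Xi(\bar{u}_i,\bar{\zeta}_i)=-(III)=P^d_n(\bar{\zeta}_i)$, the last expression being (12) evaluated along $\overrightarrow{\sigma}=\bar{\zeta}_i\nabla\bar{u}_i$. Chaining the two identities gives $P_n(\bar{u}_i)=\Xi(\bar{u}_i,\bar{\zeta}_i)=P^d_n(\bar{\zeta}_i)$ for $i=1,2,3$. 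Apart from the integrability and admissibility point in (b), every step is either the algebra of the DAE (13) or the Green's-formula bookkeeping already carried out in the text.
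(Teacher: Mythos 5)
Your proposal is correct and follows essentially the same route as the paper: the root count and ordering are read off from Lemma 2.2 (via the same trigonometric parametrisation), the solutions $\bar u_i$ come from path-independence of $\overrightarrow{\sigma}\bar\zeta_i^{-1}$ on $\mathcal{S}$ together with the DAE, and the identity (23) is the ``direct calculation'' the paper merely asserts, which you organise cleanly as Fenchel--Young equality plus the vanishing of the residuals $(I)$ and $(II)$ in (11). Your added observation that $|\overrightarrow{\sigma}/\bar\zeta_i|$ stays bounded near zeros of $\overrightarrow{\sigma}$ addresses the one genuinely delicate point, which the paper only hints at with the remark $|\overrightarrow{\sigma}|^2=o(\zeta)$.
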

\begin{proof} The relation (23) is obtained by direct calculation
from the representations of $P_n(u)$ and $P^d_n(\zeta)$,
respectively.
\end{proof}
Theorem 2.3 demonstrates that the pure complementary energy
functional $P_n^d(\zeta)$ is canonically dual to the total potential
energy functional $P_n(u)$. The equation (23) indicates there is no
duality gap between the primal and dual variational problems. In the
following, we apply the triality theory to obtain the extremality
conditions for these critical points.
\begin{thm}
Suppose that the source term $f$ and boundary condition $t$ are
given and $\overrightarrow{\sigma}(x)$ satisfies the divergence
equation (4). Then, if
$|\overrightarrow{\sigma}(x)|^2\in(8\lambda^3\nu^2/27,\infty)$,
$\forall\ x\in\mathcal{S}$, then DAE (13) has a unique solution
$\bar{\zeta}>0$, which is a global maximizer of $P_n^d(\zeta)$ over
$\mathcal{E}$, and the corresponding solution $\bar{u}$ in the form
of (20) is a global minimizer of $P_n(u)$ over $\mathcal{U}_0$,
\begin{equation}
P_n(\bar{u})=\displaystyle\min_{u\in\mathcal{U}_0}P_n(u)=\displaystyle\max_{\zeta\in\mathcal{E}}P_n^d(\zeta)=P_n^d(\bar{\zeta}).
\end{equation}
If $|\overrightarrow{\sigma}(x)|^2\in(0,8\lambda^3\nu^2/27)$,
$\forall\ x\in\mathcal{S}$, then DAE (13) has three real roots
ordered as in Theorem 2.3. Furthermore, $\bar{\zeta}_1$ is a local
maximizer of $P_n^d(\zeta)$ over $\zeta>0$, the corresponding
solution $\bar{u}_1$ is a local minimizer of $P_n(u)$ over
$\mathcal{U}_1$,
\begin{equation}
P_n(\bar{u}_1)=\displaystyle\min_{u\in\mathcal{U}_1}P_n(u)=\displaystyle\max_{\zeta>0}P_n^d(\zeta)=P_n^d(\bar{\zeta}_1),
\end{equation}
 where $\mathcal{U}_1$ is a neighborhood of
$\bar{u}_1$. While for the local maximizer $\bar{\zeta}_3$, the
corresponding solution $\bar{u}_3$ is a local maximizer of $P_n(u)$,
\begin{equation}
P_n(\bar{u}_3)=\displaystyle\max_{u\in\mathcal{U}_3}P_n(u)=\displaystyle\max_{-\nu\lambda<\zeta<-2\nu\lambda/3}P_n^d(\zeta)=P_n^d(\bar{\zeta}_3),
\end{equation}
where $\mathcal{U}_3$ is a neighborhood of $\bar{u}_3$. As for
$\bar{\zeta}_2$, in the case of 1D, the corresponding solution
$\bar{u}_2$ is a local minimizer of $P_n(u)$,
\begin{equation}
P_n(\bar{u}_2)=\displaystyle\min_{u\in\mathcal{U}_2}P_n(u)=\displaystyle\min_{-2\nu\lambda/3<\zeta<0}P_n^d(\zeta)=P_n^d(\bar{\zeta}_2),
\end{equation}
where $\mathcal{U}_2$ is a neighborhood of $\bar{u}_2$. It is worth
noticing that, when $n\geq2$, $\bar{u}_2$ is not necessarily a local
minimizer.
\end{thm}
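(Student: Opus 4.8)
The plan is to extract every assertion from the convex/concave structure of the Gao--Strang functional $\Xi(u,\zeta)$ together with the root classification of Lemma 2.2. Two elementary facts carry most of the weight. Since $U^\ast$ is a strictly convex quadratic, $\Xi(u,\cdot)$ is strictly concave for every fixed $u$; and since $D^2_{uu}\Xi(u;v,v)=\int_\Omega\zeta\,|\nabla v|^2\,dx$, the functional $\Xi(\cdot,\zeta)$ is convex in $u$ when $\zeta\geq0$ and concave when $\zeta\leq0$. From $U^{\ast\ast}=U$ one has $P_n(u)=\sup_\zeta\Xi(u,\zeta)$, the supremum being attained precisely along the constitutive manifold $\zeta=DU(\Lambda(u))$, while $P_n^d(\zeta)=\Xi(\bar u(\zeta),\zeta)$ for $\bar u(\zeta)$ a solution of the equilibrium system (8). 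Finally, at a critical pair $(\bar u,\bar\zeta)$ the two second variations are explicit: differentiating $W$ twice gives
\[
\delta^2 P_n(\bar u)(v,v)=\int_\Omega\Big(\bar\zeta\,|\nabla v|^2+\nu\,(\nabla\bar u\cdot\nabla v)^2\Big)dx,
\]
and, using the pointwise identity $|\nabla\bar u|^2=2\lambda+2\bar\zeta/\nu$ coming from $\bar\zeta=\nu(\tfrac12|\nabla\bar u|^2-\lambda)$ together with the dual algebraic equation (13), a direct computation from the representation (12) gives the pointwise value $\delta^2 P_n^d(\bar\zeta)=-(2\nu\lambda+3\bar\zeta)/(\nu\bar\zeta)$. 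Thus the single scalar $2\nu\lambda+3\bar\zeta$, whose sign changes exactly at the thresholds $\zeta=0$ and $\zeta=-2\nu\lambda/3$ separating the three root intervals of Theorem 2.3, controls all the extremality statements.

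Suppose first that $|\overrightarrow{\sigma}(x)|^2>8\lambda^3\nu^2/27$ on $\mathcal{S}$. By Lemma 2.2 there is a single real root and it is positive, hence $(\bar u,\bar\zeta)$ is the only critical point of $\Xi$ on $\mathcal{U}_0\times\mathcal{E}$ and, $\Xi$ being convex--concave there, it is a saddle point. The weak duality chain $P_n^d(\zeta)=\min_u\Xi(u,\zeta)\leq\Xi(u,\zeta)\leq\sup_{\zeta'}\Xi(u,\zeta')=P_n(u)$, valid for all admissible $u$ and all $\zeta\in\mathcal{E}$, reduces to an equality at $(\bar u,\bar\zeta)$ because the constitutive law (9) holds there; this is precisely (24). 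The very same argument, localized to a convex neighborhood $\mathcal{U}_1$ of $\bar u_1$ and to $\{\zeta>0\}$, produces (25) for the positive root $\bar\zeta_1$; it is confirmed by the second variations, since both terms of $\delta^2 P_n(\bar u_1)$ are nonnegative with the first strictly positive (so $\bar u_1$ is a local minimizer), while $\delta^2 P_n^d(\bar\zeta_1)<0$ because $2\nu\lambda+3\bar\zeta_1>0$ and $\nu\bar\zeta_1>0$.

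For the root $\bar\zeta_3\in(-\nu\lambda,-2\nu\lambda/3)$ the functional $\Xi$ is concave in $u$ and concave in $\zeta$ near $(\bar u_3,\bar\zeta_3)$ --- the ``double--maximum'' branch of the triality theory. Here $2\nu\lambda+3\bar\zeta_3<0$, so Cauchy--Schwarz together with the identity above gives
\[
\delta^2 P_n(\bar u_3)(v,v)\leq\int_\Omega\big(2\nu\lambda+3\bar\zeta_3\big)|\nabla v|^2\,dx<0,
\]
whence $\bar u_3$ is a strict local maximizer of $P_n$; simultaneously $\delta^2 P_n^d(\bar\zeta_3)=-(2\nu\lambda+3\bar\zeta_3)/(\nu\bar\zeta_3)<0$, numerator and denominator both negative, so $\bar\zeta_3$ is a local maximizer of $P_n^d$ on $(-\nu\lambda,-2\nu\lambda/3)$. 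Combined with $P_n(\bar u_3)=P_n^d(\bar\zeta_3)$ from Theorem 2.3 this yields (26). More generally, the dual-side claims in (24)--(27) also follow directly from (12), where $\zeta\mapsto P_n^d(\zeta)$ is strictly concave for $\zeta>0$ and, near each negative root $\bar\zeta$, has second derivative of sign opposite to $(2\nu\lambda+3\bar\zeta)/(\nu\bar\zeta)$.

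The delicate case is $\bar\zeta_2\in(-2\nu\lambda/3,0)$. Now $2\nu\lambda+3\bar\zeta_2>0$, so the Cauchy--Schwarz \emph{upper} bound on $\delta^2 P_n(\bar u_2)$ is positive and conveys nothing, while the trivial lower bound $\bar\zeta_2\int_\Omega|\nabla v|^2\,dx$ is negative: the primal second variation is genuinely indefinite in general. In dimension $n=1$, however, $\nabla v$ and $\nabla\bar u$ are scalar multiples of $\partial_x$, Cauchy--Schwarz is an equality, and $\delta^2 P_n(\bar u_2)(v,v)=\int_\Omega(2\nu\lambda+3\bar\zeta_2)|v'|^2\,dx>0$; with $\bar\zeta_2$ bounded away from $-2\nu\lambda/3$ this makes $\bar u_2$ a local minimizer, and together with $\delta^2 P_n^d(\bar\zeta_2)>0$ and Theorem 2.3 it gives (27). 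When $n\geq2$ one can instead choose an admissible variation $v\not\equiv0$ with $\nabla v\cdot\nabla\bar u=0$ a.e. --- for instance a nonconstant function that is locally constant along the integral curves of $\nabla\bar u$ and cut off to vanish on $\Gamma_u$ --- for which $\delta^2 P_n(\bar u_2)(v,v)=\bar\zeta_2\int_\Omega|\nabla v|^2\,dx<0$; then $\bar u_2$ is a saddle point of $P_n$, not a local minimizer. Pinning down exactly when this ``double--minimum'' duality survives is the one place where the one- and higher-dimensional pictures genuinely diverge, and it is the main obstacle; the theorem accordingly records only the affirmative case $n=1$.
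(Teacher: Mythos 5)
Your proposal is correct and, on the local statements, follows essentially the same route as the paper: both arguments reduce everything to the two second variations, use Cauchy--Schwarz to bound $\nu(\nabla\bar u\cdot\nabla v)^2\le\nu|\nabla\bar u|^2|\nabla v|^2$, and observe that the resulting coefficient $\nu|\nabla\bar u|^2+\bar\zeta=2\nu\lambda+3\bar\zeta$ (the paper writes it as $\nu(3/2|\nabla\bar u|^2-\lambda)$) changes sign exactly at $\bar\zeta=-2\nu\lambda/3$, while the dual second variation $-(|\overrightarrow{\sigma}|^2/\zeta^3+1/\nu)$, simplified via the DAE to $-(2\nu\lambda+3\bar\zeta)/(\nu\bar\zeta)$ in your version, changes sign at the same thresholds. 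Where you genuinely diverge is the global claim (24): the paper derives it from the positivity of $\delta^2 P_n$ and negativity of $\delta^2 P_n^d$ alone, which strictly speaking only yields local extremality; you instead invoke the saddle structure of the Gao--Strang functional $\Xi$ (convex in $u$ for $\zeta>0$, strictly concave in $\zeta$) and the weak-duality chain $P_n^d(\zeta)=\min_u\Xi(u,\zeta)\le\Xi(u,\zeta)\le\sup_{\zeta'}\Xi(u,\zeta')=P_n(u)$ collapsing to equality at the critical pair. That is the standard canonical-duality mechanism and it is what actually delivers \emph{global} minimality, so your argument is more complete on this point. You also supply an explicit mechanism (a variation with $\nabla v\cdot\nabla\bar u=0$ a.e.) for the failure of local minimality of $\bar u_2$ when $n\ge2$, which the paper only asserts; your construction is plausible but stated informally (existence of such an admissible $v$ depends on $\nabla\bar u$ not vanishing and on the geometry of its integral curves), which is acceptable since the theorem only claims that $\bar u_2$ is ``not necessarily'' a local minimizer. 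The one caveat common to both your proof and the paper's is that negative-definiteness of the second variation in the $H^1$-type seminorm is taken to imply local maximality in the topology of $\mathcal{U}_0$ without comment; this is not a gap relative to the paper.
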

\begin{proof}
First, we recall the second variation formula for both $P_n(u)$ and
$P_n^d(\zeta)$. On the one hand, for $\forall\
\varsigma\in\mathcal{U}_1:=\Big\{u\in\mathcal{U}_0\Big|\ \nabla
u\neq0\Big\},$
\begin{eqnarray*}
\delta^2_\varsigma P_n(u) &=&\int_\Omega\frac{d^2}{dt^2}\Big\{\nu/2\Big(1/2|\nabla(u+t\varsigma)|^2-\lambda\Big)^2\Big\}\Big|_{t=0}dx \\
&=& \nu\int_\Omega\Big\{|\nabla
u\cdot\nabla\varsigma|^2+\Big(1/2|\nabla
u|^2-\lambda\Big)|\nabla\varsigma|^2\Big\}dx.
\end{eqnarray*}
On the other hand, for $\forall\ \eta\neq0\in\mathcal{E}$,
\begin{eqnarray*}\delta^2_\eta P_n^d(\zeta)&=&-{1}/{2}\int_\Omega\frac{d^2}{dt^2}\Big\{{|\overrightarrow{\sigma}|^2}/{(\zeta+t\eta)}+2\lambda(\zeta+t\eta)+{(\zeta+t\eta)^2}/{\nu}\Big\}\Big|_{t=0}dx\\
&=&-\int_\Omega\Big\{{|\overrightarrow{\sigma}|^2}/{\zeta^3}+{1}/{\nu}\Big\}\eta^2dx.\end{eqnarray*}
If $\zeta>0$, according to the definition of $\zeta=\nu(1/2|\nabla
u|^2-\lambda),$ one knows immediately that
$$\delta^2_\varsigma P_n(u)>0,\ \ \delta^2_\eta P_n^d(\zeta)<0.$$ Then (24) and (25) are
concluded. Actually, when
$|\overrightarrow{\sigma}|^2\in(8\lambda^3\nu^2/27,\infty)$, then by
applying the definition of $\overrightarrow{\sigma}$, one has
$$\lambda^3<27/8(1/2|\nabla u|^2-\lambda)^2|\nabla u|^2.$$ Solving
this inequality, we obtain $\lambda\in(0,3/8|\nabla u|^2).$ Now we
consider the negative $\bar{\zeta}_i(x)$, $i=2,3$. When
$\zeta<-2/3\nu\lambda$, then $\delta^2_\eta P_n^d(\zeta)<0$ and
$\lambda>3/2|\nabla u|^2$. In this case,
\begin{eqnarray*}
\delta^2_\varsigma P_n(u) & \leq &  \nu\int_\Omega\Big\{|\nabla
u|^2|\nabla\varsigma|^2+\Big(1/2|\nabla
u|^2-\lambda\Big)|\nabla\varsigma|^2\Big\}dx \\
& = & \nu\int_\Omega\Big(3/2|\nabla
u|^2-\lambda\Big)|\nabla\varsigma|^2dx<0.
\end{eqnarray*}
Then (26) is proved. It remains to consider (27). For
$\zeta>-\sqrt[3]{|\overrightarrow{\sigma}|^2\nu}$, the second
variation $\delta^2_\eta P_n^d(\zeta)>0$. In addition,
$-2\nu\lambda/3<\zeta<0$ indicates
$\lambda\in(1/2|u_x|^2,3/2|u_x|^2)$. In fact,
$-2/3\nu\lambda<\zeta<0$ gives $\delta^2_\eta P_n^d(\zeta)>0$, which
indicates that $\bar{u}_2$ is a local minimizer.
\end{proof}
\section{Applications}
Now we apply Theorem 2.4 to practical problems in 2D domain. Let us
consider the open annulus $\Omega=\mathbb{A}_{R_1}^{R_2}$ bounded by
two concentric circles with radii $R_1$ and $R_2$($R_1<R_2$). Let
$f$ and $t$ be given, $\Gamma_t:=\Gamma_1\cup\Gamma_2$,
$$
f=-r:=-\sqrt{x^2 + y^2},\ \ t={R_1^2/ 3}\ \text{on}\ \Gamma_1,\ \
t=-{R_2^2/ 3}\ \text{on}\ \Gamma_2.$$ It is evident that $f\in
L^\infty(\Omega)$ and $t\in L^\infty(\Gamma_1\cup\Gamma_2)$. In the
case of $\nu=1,\lambda=1$, the primal problem is of the form
$$(\mathcal{P}_2):\ \ \ \
\displaystyle\min_{u\in\mathcal{U}_0}\Big\{P_2(u)=\int_{\Omega}\Bigl[{1/2}\Big({1/2}|\nabla
u|^2-1\Big)^2-ru\Bigr] dx+\int_{\Gamma_2}{uR_2^2/
3}d\Gamma-\int_{\Gamma_1}{uR_1^2/ 3} d\Gamma\Big\}.
$$

\begin{figure*}[htbp]
  \centering
  \subfigure[$\bar{\zeta}_1$ with respect to $r$]{
    \label{fig:subfig:a} 
    \includegraphics[width=1.8in]{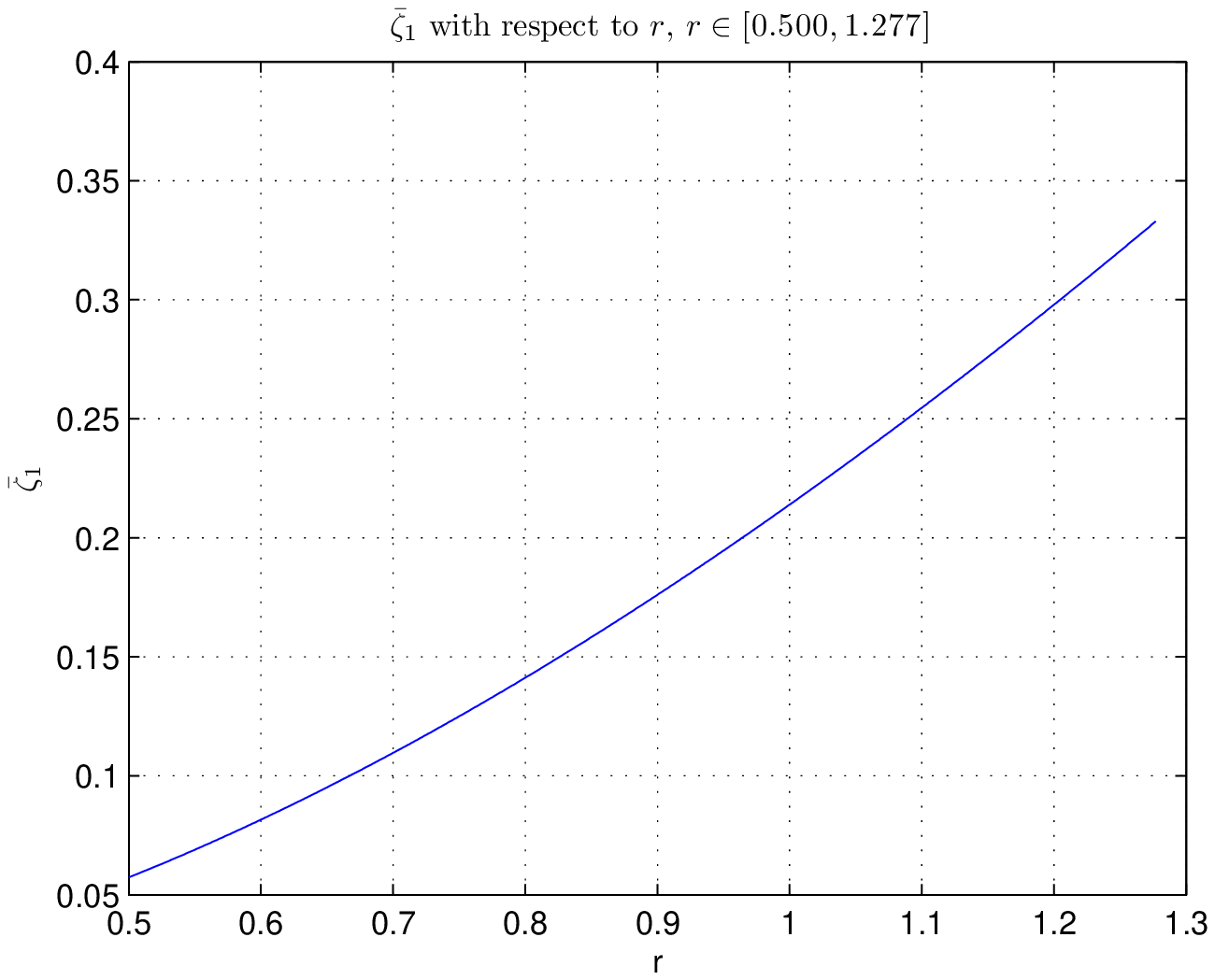}}
  \hspace{0.1in}
\subfigure[$\bar{\zeta}_1$ with respect to $x$ and $y$]{
    \label{fig:subfig:a} 
    \includegraphics[width=1.8in]{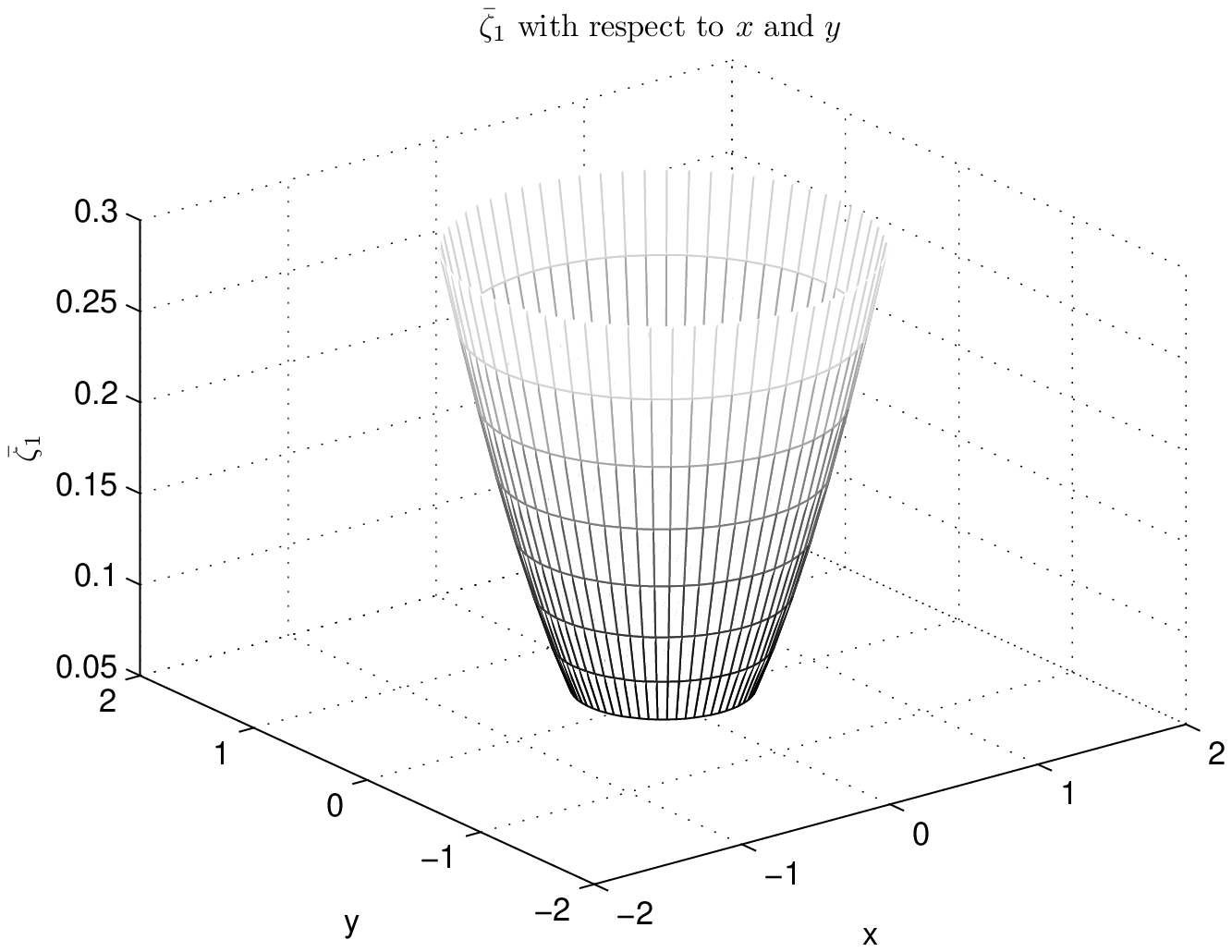}}
    \hspace{0.1in}
\subfigure[$\bar{u}_1(r)-\bar{u}_1(0.500)$ with respect to $r$]{
    \label{fig:subfig:a} 
    \includegraphics[width=1.8in]{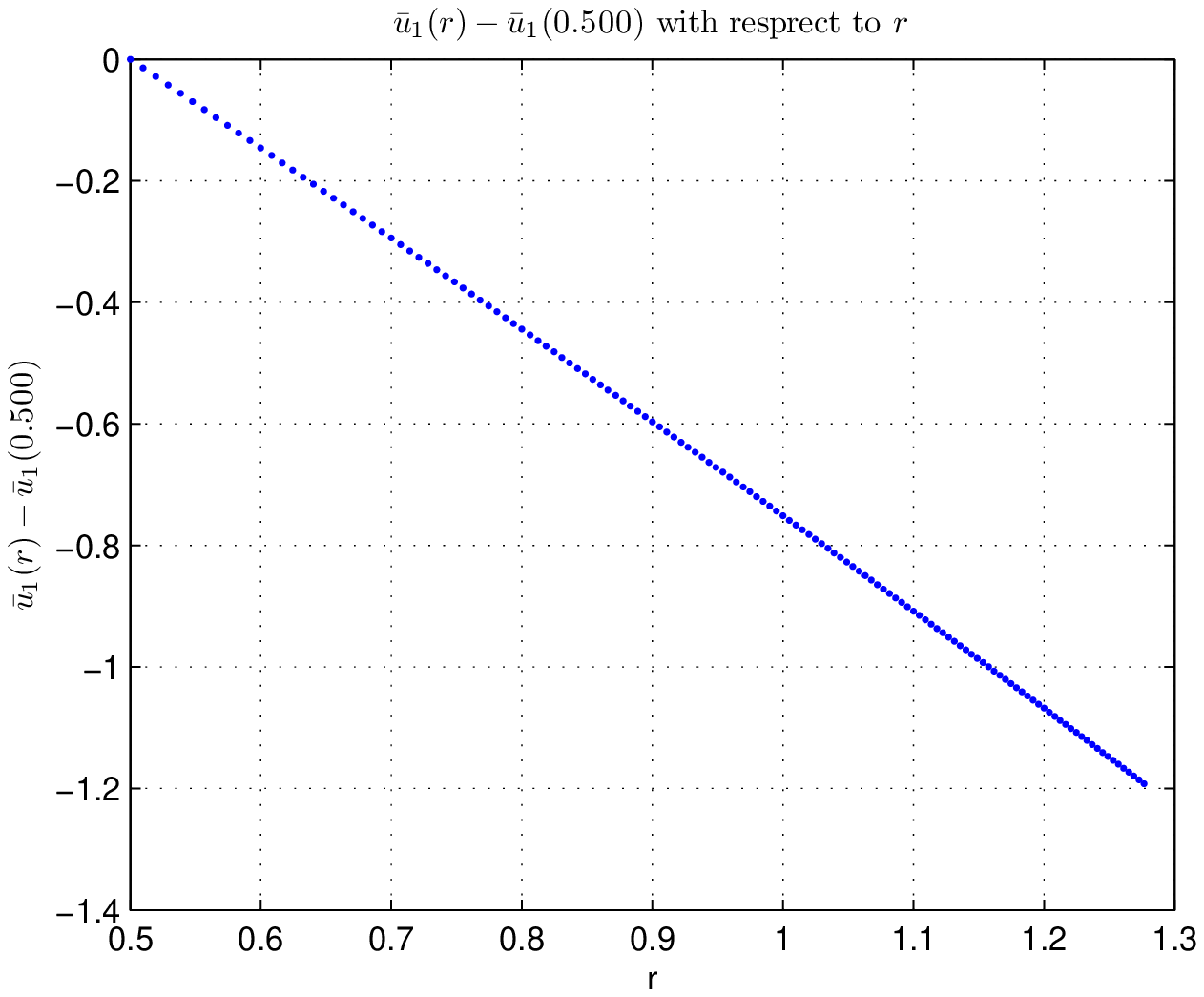}}
  \caption{Dual solution $\bar{\zeta}_1$ and primal solution $\bar{u}_1$, $r\in[0.500,1.277]$}
  \label{fig:subfig} 
\end{figure*}
\begin{figure*}[htbp]
  \centering
  \subfigure[$\bar{\zeta}_2$ with respect to $r$]{
    \label{fig:subfig:a} 
    \includegraphics[width=1.8in]{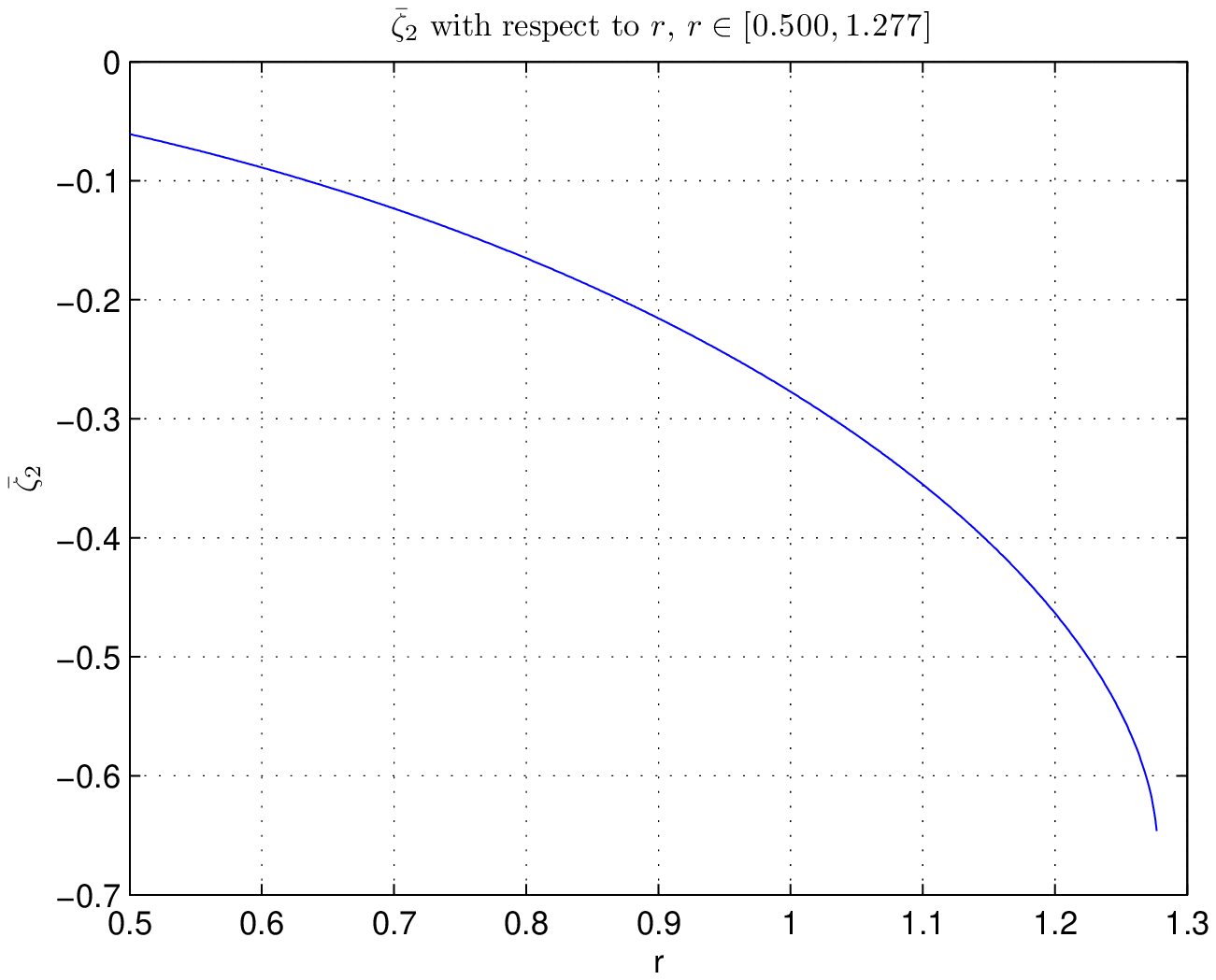}}
  \hspace{0.1in}
\subfigure[$\bar{\zeta}_2$ with respect to $x$ and $y$]{
    \label{fig:subfig:a} 
    \includegraphics[width=1.8in]{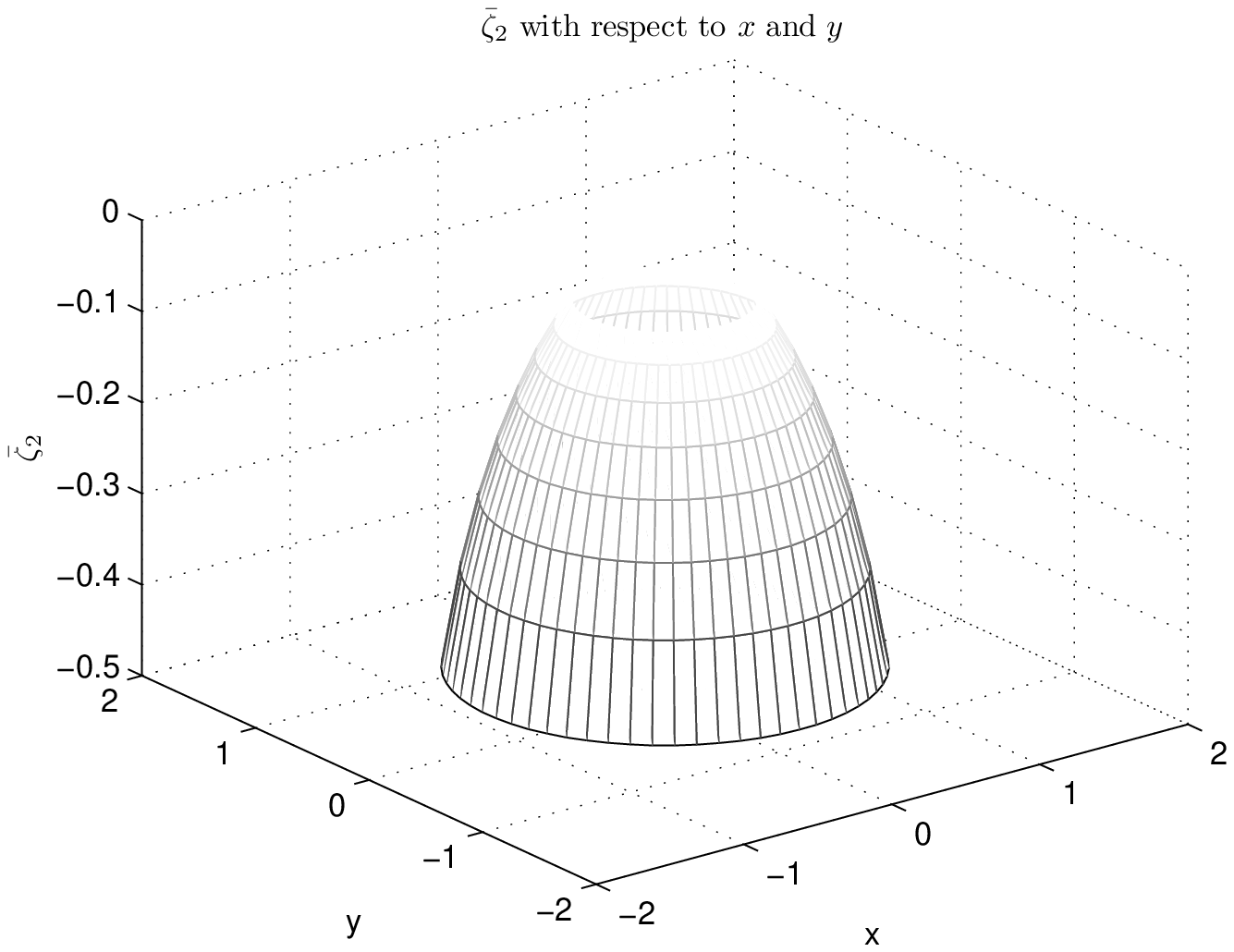}}
    \hspace{0.1in}
\subfigure[$\bar{u}_2(r)-\bar{u}_2(0.500)$ with respect to $r$]{
    \label{fig:subfig:a} 
    \includegraphics[width=1.8in]{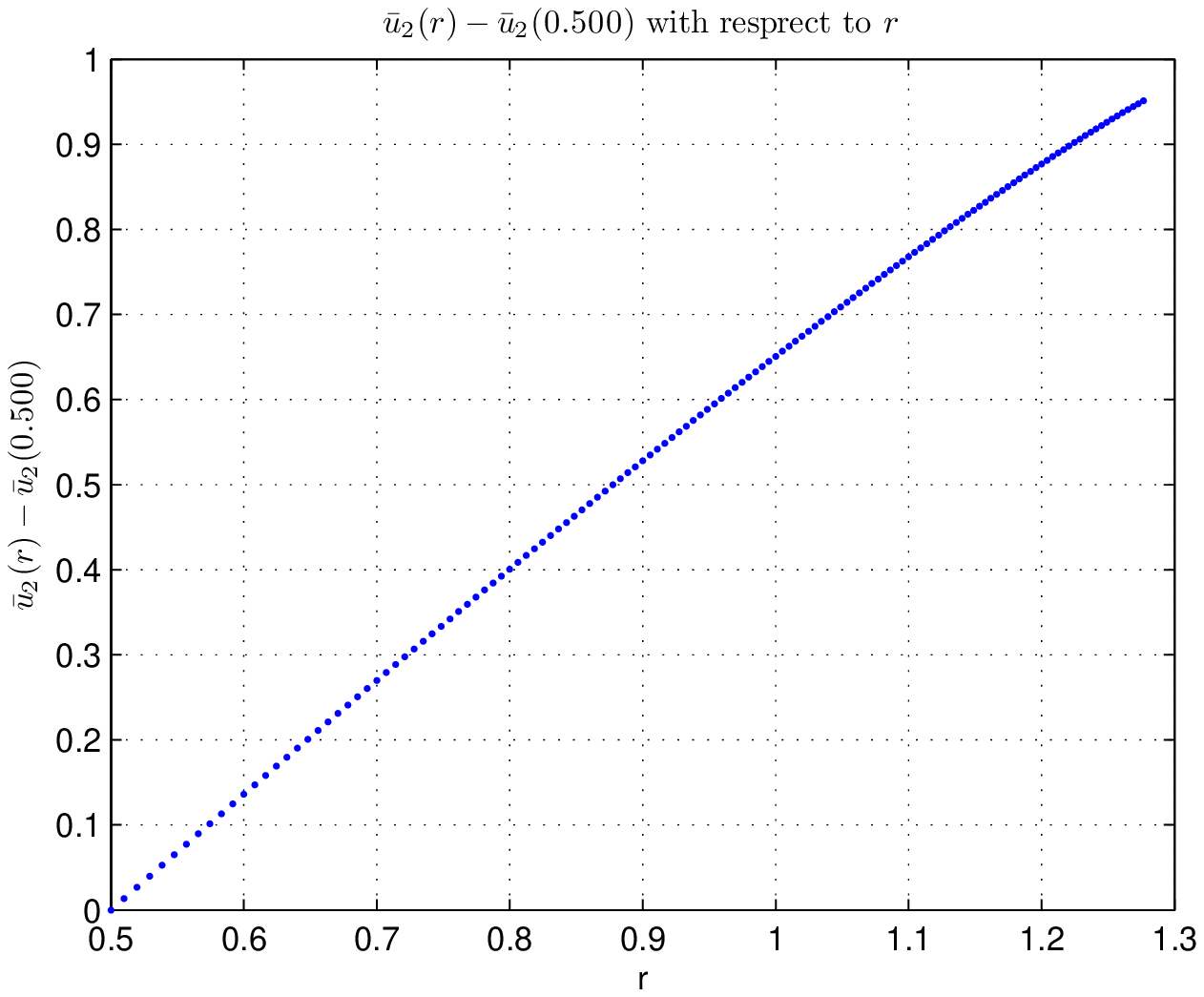}}
  \caption{Dual solution $\bar{\zeta}_2$ and primal solution $\bar{u}_2$, $r\in[0.500,1.277]$}
  \label{fig:subfig} 
\end{figure*}
\begin{figure*}[htbp]
  \centering
  \subfigure[$\bar{\zeta}_3$ with respect to $r$]{
    \label{fig:subfig:a} 
    \includegraphics[width=1.8in]{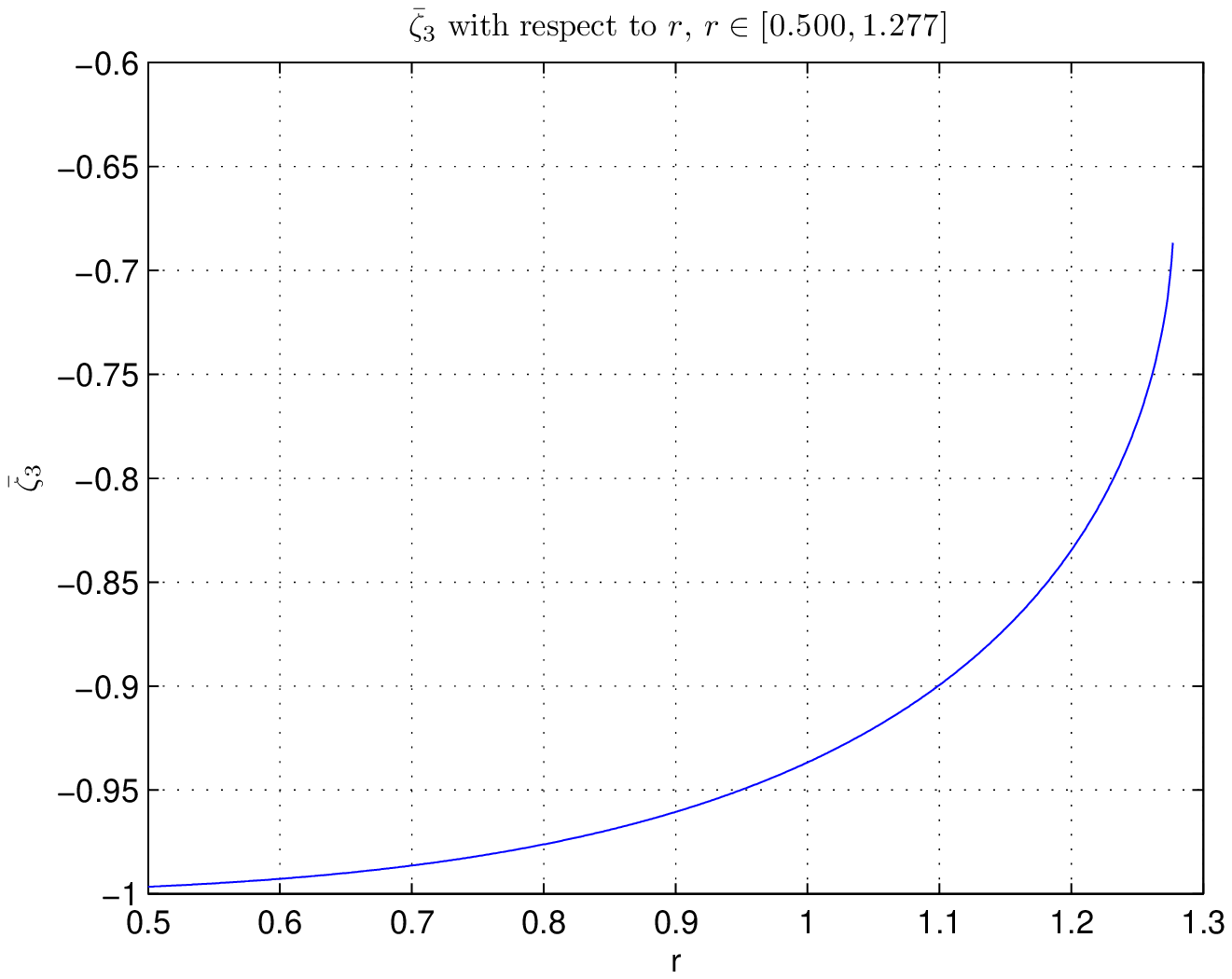}}
  \hspace{0.1in}
\subfigure[$\bar{\zeta}_3$ with respect to $x$ and $y$]{
    \label{fig:subfig:a} 
    \includegraphics[width=1.8in]{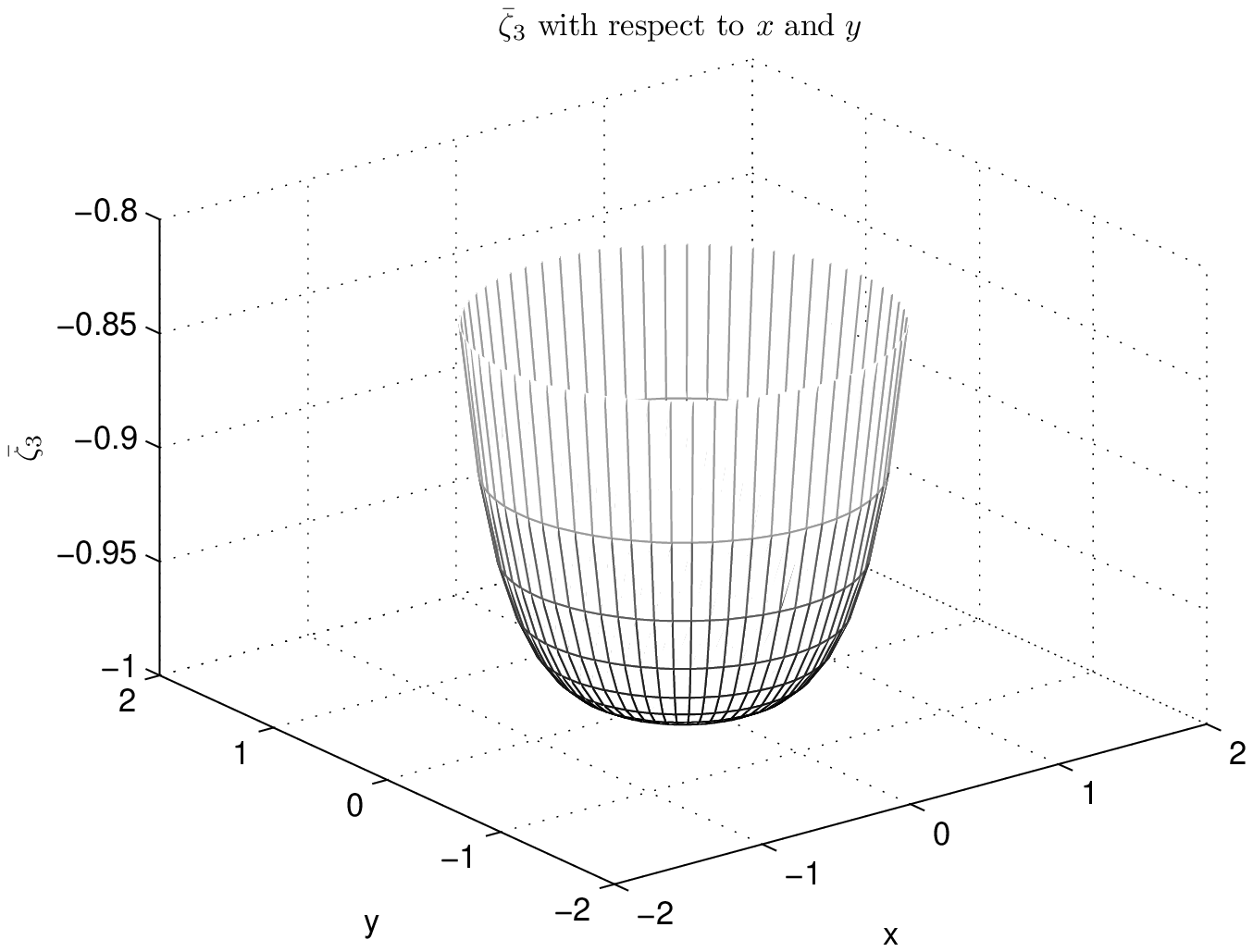}}
    \hspace{0.1in}
\subfigure[$\bar{u}_3(r)-\bar{u}_3(0.500)$ with respect to $r$]{
    \label{fig:subfig:a} 
    \includegraphics[width=1.8in]{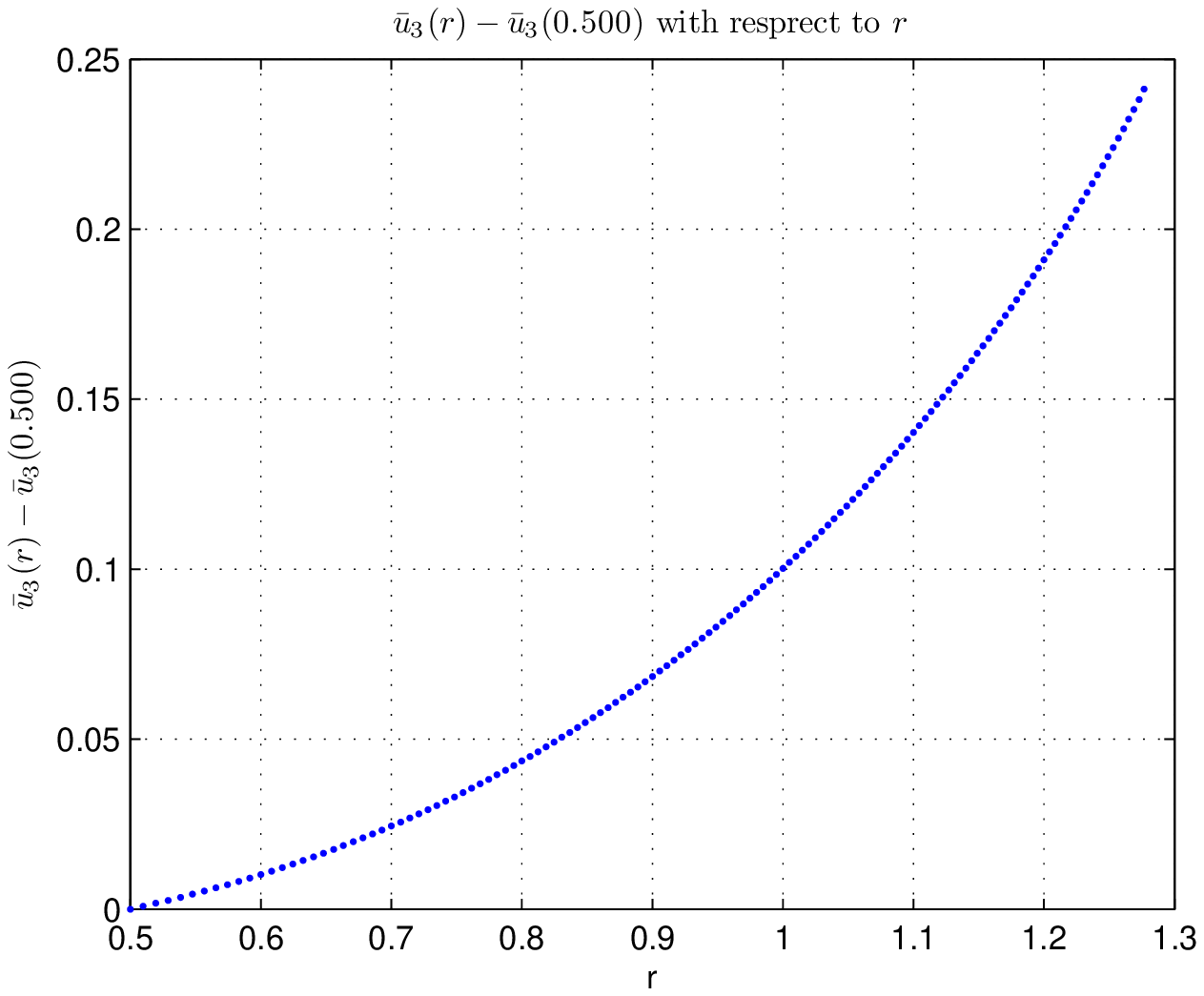}}
  \caption{Dual solution $\bar{\zeta}_3$ and primal solution $\bar{u}_3$, $r\in[0.500,1.277]$}
  \label{fig:subfig} 
\end{figure*}
We consider the radially symmetric solution of (19). Without any
confusion, we denote the radially symmetric functions $u(r):=u(x,y)$
and $\zeta(r):=\zeta(x,y)$. For (4), we have the unique solution in
the form of
$\overrightarrow{\sigma}=\displaystyle(-{rx/3},-{ry/3}),$
$r\in(R_1,R_2)$. Then the singular algebraic polynomial (13) is
given as
\begin{equation}
{r^4/9}=2\zeta^2(1+\zeta). \end{equation} From Theorem 2.3, one
knows immediately that when $r^4\in(8/3,\infty)$, (28) has a unique
real root; while when $r^4\in(0,8/3)$, (28) has three real roots(see
Figures 1-3(a)(b)). For instance,
\begin{itemize} \item if we set $r=2$, then
$\bar{\zeta}_1=0.719078$, $\bar{\zeta}_2=-0.859539-0.705226i$,
$\bar{\zeta}_3=-0.859539+0.705226i$; \item if we set $r=1$, then
$\bar{\zeta}_1=0.213928$, $\bar{\zeta}_2=-0.277249$,
$\bar{\zeta}_3=-0.936679$;
\item if we set $r=0.5$, then
$\bar{\zeta}_1=0.0573064$, $\bar{\zeta}_2=-0.0608031$,
$\bar{\zeta}_3=-0.996503$.
\end{itemize}
Since $\bar{\zeta}_i$ is radially symmetric,  as a result,
\begin{equation}{\overrightarrow{\sigma}/\bar{\zeta}_i}=(-{rx/
{(3\bar{\zeta}_i)}},-{ry/ {(3\bar{\zeta}_i)}})\end{equation}
satisfies the compatibility condition (17) on $\overline{\Omega}$,
$i=1,2,3$. According to Theorem 2.4, we have corresponding
analytical solutions of the form for $i=1,2,3$, respectively,
\begin{equation}
\bar{u}_i(x,y)=-\int_{(x_0,y_0)}^{(x_0,y)}\frac{y\sqrt{x_0^2+y^2}}{3\bar{\zeta}_i(\sqrt{x_0^2+y^2})}dy-
\int_{(x_0,y)}^{(x,y)}\frac{x\sqrt{x^2+y^2}}{3\bar{\zeta}_i(\sqrt{x^2+y^2})}dx+\bar{u}_i(x_0,y_0),
\end{equation}
where $(x_0,y_0),(x,y)\ \text{on}\ \overline{\Omega}$(see Figures
1-3(c)).\\
\\
{\bf Acknowledgement}: The main results in this paper were obtained
during a research collaboration at the Federation University
Australia in August, 2015. The first author wishes to thank
Professor David Y. Gao for his hospitality and financial support.
This project is partially supported by US Air Force Office of
Scientific Research (AFOSR FA9550-10-1-0487), Natural Science
Foundation of Jiangsu Province (BK 20130598), National Natural
Science Foundation of China (NSFC 71273048, 71473036, 11471072), the
Scientific Research Foundation for the Returned Overseas Chinese
Scholars, Fundamental Research Funds for the Central Universities on
the Field Research of Commercialization of Marriage between China
and Vietnam (No. 2014B15214). This work is also supported by Open
Research Fund Program of Jiangsu Key Laboratory of Engineering
Mechanics, Southeast University (LEM16B06). In particular, the
authors also express their deep gratitude to the referees for their
careful reading and useful remarks.

\end{document}